\documentclass{article}

\usepackage{amssymb,amsmath,amsthm,latexsym, color}

\usepackage{graphics}

\newcommand{\T}{x_{3/2}}
\newcommand{\Tt}{y_{3/2}}

\topmargin 0pt
\advance \topmargin by -\headheight
\advance \topmargin by -\headsep
\textheight 8.9in
\oddsidemargin 0pt
\evensidemargin \oddsidemargin
\marginparwidth 0.5in
\textwidth 6.5in

\theoremstyle{plain}
\newtheorem{theorem}{Theorem}

\newtheorem{proposition}[theorem]{Proposition}
\theoremstyle{definition}

\newtheorem{conjecture}[theorem]{Conjecture}

\begin{document}


\begin{center}
\vskip 1cm{\LARGE\bf The Thue-Morse sequence in base 3/2}

%

\medskip

\vskip 1cm
\large
F. M. Dekking \\
CWI and Delft University of Technology \\
Faculty EEMCS, P.O.~Box 5031\\
2600 GA Delft, The Netherlands\\
F.M.Dekking@math.tudelft.nl 
\end{center}

\vskip .2 in

\begin{abstract}
We discuss the base 3/2 representation of the natural numbers. We prove that the sum of digits function of the representation is a fixed point of a 2-block substitution on an infinite alphabet, and that this implies that sum of digits function modulo 2 of the representation is a fixed point $\T$ of a 2-block substitution on $\{0,1\}$. We prove that $\T$ is mirror invariant, and present a list of conjectured properties of $\T$, which we think will be hard to prove. Finally, we make a comparison with a variant of the base 3/2 representation, and give a general result on $p$-$q$-block substitutions.
\end{abstract}

\section{Introduction}

 A natural number $N$ is written in base 3/2 if $N$ has the form
 \begin{equation}\label{eq:exp}
    N= \sum_{i\ge 0} d_i\Big( \frac32\Big)^i,
 \end{equation}  \vspace*{-.0cm}
  with digits $d_i=0,1$ or 2.

  Base 3/2 representations are also known  as sesquinary representations of the natural numbers (see, e.g., \cite{Propp}).
  We  write these expansions as
  $${\rm SQ}(N) = d_R(N)\dots d_1(N)d_0(N)= d_R\dots d_1d_0.$$

  \noindent We have, for example, ${\rm SQ}(7)=211$, since $2\cdot(9/4)+(3/2)+1=7$.

  \medskip

  See A024629 in \cite{oeis} for the continuation of the following table.

  \medskip

 \begin{center}
 \begin{tabular}{|c|c|c|c|c|c|c|c|c|c|c|c|}
 \hline \noalign{\smallskip}
 $N$ & 0 & 1 & 2 & 3 & 4 & 5 & 6 & 7 & 8 & 9 & 10\\[.04cm]\noalign{\smallskip}
 \hline \noalign{\smallskip}
 ${\rm SQ}(N)$ & 0 & 1 & 2 & 20 & 21 & 22 & 210 & 211 & 212 & 2100 & 2101\\[.04cm]\noalign{\smallskip}
 \hline
 \end{tabular}
 \end{center}

 \medskip

\noindent  Ignoring leading 0's, the base 3/2 representation of a number $N$ is unique (see Section \ref{sec:32}).

Let for $N\ge 0$
  $$s_{3/2}(N):=\sum_{i=0}^{i=R} d_i(N)$$
be the sum of digits function of the base 3/2 expansions. We have (see A244040 in \cite{oeis})
  $$s_{3/2} = 0, 1, 2, 2, 3, 4, 3, 4, 5, 3, 4, 5, 5, 6, 7, 4, 5, 6, 5, 6, 7, 7, 8, 9, 5, 6, 7, 5, 6, 7, 7, 8, 9, 8, 9, 10,\dots$$


  \medskip

  \noindent In this note we  study the base 3/2 analogue of the Thue-Morse sequence  (where the base equals 2), i.e., the sequence (see A357448  in \cite{oeis})
  $$(\T(N)):= (s_{3/2}(N) \!\!\! \mod 2) \,= \, 0, 1, 0, 0, 1, 0, 1, 0, 1, 1, 0, 1, 1, 0, 1, 0, 1, 0, 1, 0, 1, 1, 0, 1, 1,\dots $$

  \medskip

The Thue Morse sequence is the fixed point starting with 0 of the substitution $0\rightarrow 01,\;1\rightarrow 10$. This might be called a 1-2-block substitution.
A {\it 2-3-block substitution} $\kappa$ on an alphabet $A$ replaces  blocks $ab$ of length 2 by words $\kappa(ab)$ of length 3.
Its action extends to infinite sequences $x$ by defining $\kappa:x \mapsto y$ by $y_{3k}\dots y_{3(k+1)-1}=\kappa(x_{2k}x_{2k+1})$, for $k=0,1,\dots$.

\medskip

\begin{theorem} \label{th:main} The sequence $\T$ is a fixed point of the $2$-$3$-block substitution
$$\hspace{6cm} \kappa: \; \left\{\begin{aligned}
00 & \rightarrow 010\\[-.1cm]
01 & \rightarrow 010  \\[-.1cm]
10 & \rightarrow 101  \\[-.1cm]
11 & \rightarrow 101 \end{aligned}\hspace*{16cm}\right\}$$
\end{theorem}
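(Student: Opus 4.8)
\emph{Plan of proof.} The first step is to unpack what the assertion $\kappa(\T)=\T$ means. By the definition of the action of a $2$-$3$-block substitution on sequences, it is equivalent to
$$\T(3k)\,\T(3k+1)\,\T(3k+2)=\kappa\big(\T(2k)\,\T(2k+1)\big)\qquad(k\ge 0).$$
Reading off $\kappa$, we see that $\kappa(ab)$ does not depend on $b$: it equals $\T(2k),\,1-\T(2k),\,\T(2k)$ (namely $010$ if $\T(2k)=0$ and $101$ if $\T(2k)=1$). So the theorem reduces to the three scalar identities
$$\T(3k)=\T(2k),\qquad \T(3k+1)=1-\T(2k),\qquad \T(3k+2)=\T(2k)\qquad(k\ge 0).$$

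The engine for proving these is a recursion for the digits of ${\rm SQ}(N)$. Write ${\rm SQ}(N)=d_R\cdots d_1d_0$ and let $\overline N$ be the natural number whose base $3/2$ representation is $d_R\cdots d_1$; then \eqref{eq:exp} reads $N=d_0+\tfrac{3}{2}\,\overline N$, i.e.\ $2N=2d_0+3\overline N$. Hence $2N\equiv 2d_0\pmod 3$, so $N\equiv d_0\pmod 3$, and since $d_0\in\{0,1,2\}$ this forces $d_0(N)=N\bmod 3$ and then $\overline N=2(N-d_0)/3=2\lfloor N/3\rfloor$. By the uniqueness of the base $3/2$ representation (Section \ref{sec:32}), $d_R\cdots d_1$ is exactly ${\rm SQ}(2\lfloor N/3\rfloor)$, and since $\overline N\le 2N/3<N$ for $N\ge 1$ the recursion terminates. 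Summing digits yields the identity
$$s_{3/2}(N)=(N\bmod 3)+s_{3/2}\big(2\lfloor N/3\rfloor\big),\qquad s_{3/2}(0)=0.$$

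Now I would simply specialise: for $j\in\{0,1,2\}$ we have $\lfloor(3k+j)/3\rfloor=k$ and $(3k+j)\bmod 3=j$, so $s_{3/2}(3k+j)=j+s_{3/2}(2k)$. Reducing modulo $2$ gives $\T(3k)=\T(2k)$, $\T(3k+1)=(1+s_{3/2}(2k))\bmod 2=1-\T(2k)$, and $\T(3k+2)=(2+s_{3/2}(2k))\bmod 2=\T(2k)$, which are precisely the three identities above. This proves the theorem; en route it also establishes the companion claim of the abstract, that $s_{3/2}$ itself is the fixed point of the $2$-$3$-block substitution $ab\mapsto a\,(a{+}1)\,(a{+}2)$ on the alphabet $\N$, with $\kappa$ its reduction modulo $2$.

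The only place requiring genuine care is the digit recursion: one must invoke uniqueness of base $3/2$ representations to know that deleting the last digit of ${\rm SQ}(N)$ produces ${\rm SQ}(2\lfloor N/3\rfloor)$ and not merely some word representing that integer — the arithmetic identity $2N=2d_0+3\overline N$ is exactly what makes this work. Everything else (the base case $N=0$, the convention on leading zeros, the two-line computation modulo $2$) is routine, so I do not anticipate a serious obstacle.
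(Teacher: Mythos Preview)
Your proof is correct and follows essentially the same route as the paper: establish the digit recursion $s_{3/2}(3k+j)=j+s_{3/2}(2k)$ (the paper's Theorem~\ref{th:SD}) and then reduce modulo~$2$. One small point of exposition: when you introduce $\overline N$ as ``the natural number whose base $3/2$ representation is $d_R\cdots d_1$'' and then use $3\overline N\equiv 0\pmod 3$, you are tacitly assuming $\overline N\in\N$ before you have shown it; the quickest fix is to clear denominators via $2^R N=\sum_i d_i\,3^i 2^{R-i}$ and read off $N\equiv d_0\pmod 3$ directly.
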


Theorem \ref{th:main} will be proved in Section \ref{subs:TM}.

\section{Sum of digits function and Thue-Morse in base 3/2}\label{sec:AA}

\subsection{Sum of digits function in base 3/2}\label{subs:SD}

Let  $s_{3/2}=(0, 1, 2, 2, 3, 4, 3, 4, 5, 3, 4, 5, 5, 6, 7, 4, 5,\dots)$   be the sum of digits function of the base 3/2 expansions. To describe this sequence we extend the notion of a $p$-$q$-block substitution to alphabets of infinite cardinality.

\medskip

\begin{theorem}\label{th:SD} The sequence  $s_{3/2}$ is the fixed point starting with $0$ of the $2$-$3$-block substitution given by
$$a,b \mapsto a,a+1,a+2 \quad {\rm  for\;} a = 0,1,2,... {\;\rm and\;} b = 0,1,2,....$$
\end{theorem}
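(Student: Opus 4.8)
The plan is to reduce the statement to a single recursion for the base 3/2 digits — namely that deleting the last digit of ${\rm SQ}(N)$ yields the representation of $2\lfloor N/3\rfloor$ — and then to read off from it the defining equation of the $2$-$3$-block substitution.

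First I would establish the digit recursion. Fix $N\ge 1$ and write ${\rm SQ}(N)=d_R\cdots d_1d_0$. By \eqref{eq:exp},
$$N=\sum_{i=0}^R d_i\Big(\tfrac32\Big)^i = d_0+\tfrac32\,M,\qquad M:=\sum_{j=0}^{R-1}d_{j+1}\Big(\tfrac32\Big)^j .$$
The string $d_R\cdots d_1$ has digits in $\{0,1,2\}$, so it is a base 3/2 representation of the natural number $M$; by the uniqueness recalled in Section \ref{sec:32} (leading zeros, if present, contribute nothing), $s_{3/2}(M)=\sum_{i=1}^{R}d_i$. From $N=d_0+\tfrac32 M$ with $M\in\N$ we get $3\mid N-d_0$, so $d_0$ is forced to equal $N\bmod 3$ and $M=\tfrac23\bigl(N-(N\bmod 3)\bigr)=2\lfloor N/3\rfloor$. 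Hence, for every $N\ge 0$,
$$s_{3/2}(N)=s_{3/2}\bigl(2\lfloor N/3\rfloor\bigr)+(N\bmod 3),$$
the case $N=0$ being trivial. This is the only place where the arithmetic of base 3/2 enters, and it is the technical heart of the argument, although it is short.

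Next I would specialise this identity to $N=3k,\,3k+1,\,3k+2$, obtaining for all $k\ge 0$
$$s_{3/2}(3k)=s_{3/2}(2k),\qquad s_{3/2}(3k+1)=s_{3/2}(2k)+1,\qquad s_{3/2}(3k+2)=s_{3/2}(2k)+2.$$
Writing $x=s_{3/2}$, these three equalities say exactly that $x_{3k}x_{3k+1}x_{3k+2}=x_{2k},\,x_{2k}+1,\,x_{2k}+2=\kappa(x_{2k}x_{2k+1})$ for every $k$, i.e.\ that $x$ satisfies the fixed-point equation of the $2$-$3$-block substitution $a,b\mapsto a,a+1,a+2$; moreover $x_0=s_{3/2}(0)=0$. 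Since $\kappa(ab)$ always begins with $a$, a sequence satisfying this fixed-point equation is determined coordinate by coordinate from its first term (each triple $x_{3k},x_{3k+1},x_{3k+2}$ is dictated by $x_{2k}$, whose index is strictly smaller for $k\ge1$), so the fixed point starting with $0$ is unique and equals $s_{3/2}$.

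I expect the main obstacle to be purely the bookkeeping in the first step: verifying that the tail $d_R\cdots d_1$ is genuinely the (canonical) base 3/2 representation of $2\lfloor N/3\rfloor$, for which one leans on the uniqueness of the representation. Once the displayed recursion for $s_{3/2}$ is in hand, the identification with the fixed point of $\kappa$ is immediate.
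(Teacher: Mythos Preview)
Your proof is correct and follows essentially the same approach as the paper: establish the recursion $s_{3/2}(3k+r)=s_{3/2}(2k)+r$ from uniqueness of the base $3/2$ expansion, then read this off as the fixed-point equation of the $2$-$3$-block substitution. The paper argues in the forward direction (appending $r$ to ${\rm SQ}(2k)$ gives a valid, hence by uniqueness the canonical, representation of $3k+r$) and omits your remark on uniqueness of the fixed point, but the substance is identical.
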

	
\medskip

\begin{proof} We have $d(0)=0,d(1)=1$ and from the uniqueness of the base 3/2 expansions it follows immediately that $d(3N+r) = d(2N)+r$ for $N\ge 0$ and $r = 0, 1, 2. $

Thus $s_{3/2}(3N)=s_{3/2}(2N), s_{3/2}(3N+1)=s_{3/2}(2N)+1$,  and $s_{3/2}(3N+2)=s_{3/2}(2N)+2$. This gives the result. \end{proof}

\subsection{Thue-Morse in base 3/2}\label{subs:TM}

\noindent {\it Proof of Theorem 1.} This follows directly from Theorem \ref{th:SD} by taking $a$ and $b$ modulo 2.  $\square$

\bigskip

Although iterates of $\kappa: 00\rightarrow010, 01\rightarrow010, 10\rightarrow101, 11\rightarrow101$ are undefined, we can generate the fixed point $\T$ by iteration of a map $\kappa'$ defined by
$\kappa'(w) = \kappa(w)$ if $w$ has even length, and $\kappa'(v) = \kappa(w)$ if $v = w0$ or $v = w1$ has odd length.

The fact that the iterates of $\kappa$ are undefined causes difficulty in establishing properties of $\T$.
This is similar to the  lack of progress in the last 25 years to prove the conjectures on the Kolaskoski sequence, which is also fixed point of a 2-block substitution (cf.~\cite{Dekking-Kol}, \cite{Two-block}).
Here is a property that is open for the Kolakoski sequence, but can be proved for $\T$.

\begin{proposition} If a word $w$ occurs in $\T$, then its binary complement  $w^*$ defined  by $0^*=1, 1^*=0$, also occurs in $\T$.
\end{proposition}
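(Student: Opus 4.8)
The plan is to exploit the structure of the $2$-$3$-block substitution $\kappa$ from Theorem~\ref{th:main}, specifically the observation that $\kappa$ depends on its first letter only, and that complementation interacts with $\kappa$ in a controlled way. First I would record the key identity: if $ab$ is any length-$2$ block over $\{0,1\}$, then $\kappa(a^*b^*) = \kappa(ab)^*$. Indeed $\kappa(00)=\kappa(01)=010$ and $\kappa(10)=\kappa(11)=101=010^*$, so $\kappa(a^*b^*)=\kappa(ab)^*$ holds in all four cases. In other words, the binary complementation map commutes with $\kappa$ (and hence with the extension $\kappa'$, up to the padding convention, which only affects a trailing letter).

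Next I would set up an induction on the level of iteration. Write $u^{(0)}=0$ and $u^{(n+1)}=\kappa'(u^{(n)})$, so that $u^{(n)}$ is a prefix of $\T$ and $\T=\lim_n u^{(n)}$. A word $w$ occurs in $\T$ iff it occurs in some $u^{(n)}$. The claim I would prove by induction is: for every $n$, the word $v^{(n)} := \overline{u^{(n)}}$ (the binary complement of the prefix $u^{(n)}$) occurs in $\T$. The base case $n=0$ is immediate since $v^{(0)}=1$ occurs in $\T$. For the inductive step, suppose $v^{(n)}=\overline{u^{(n)}}$ occurs in $\T$, hence occurs inside some $u^{(m)}$ starting at an even position (a position divisible by $2$ — here I need the standard fact that any occurrence of a word can be pushed into a sub-block of a higher iterate, aligned with the length-$2$ block structure, by passing to $u^{(m+1)}$ if necessary and extending the word by at most one letter on each side). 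Applying $\kappa'$ to that occurrence, and using the commutation identity above block by block, the image $\kappa'$-maps $\overline{u^{(n)}}$ to $\overline{\kappa'(u^{(n)})}=\overline{u^{(n+1)}}=v^{(n+1)}$, which therefore occurs in $u^{(m+1)}$, hence in $\T$. This closes the induction.

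Finally, to conclude the proposition: let $w$ occur in $\T$. Then $w$ occurs in $u^{(n)}$ for some $n$, so $w^*$ occurs in $\overline{u^{(n)}}=v^{(n)}$, and by the induction $v^{(n)}$ occurs in $\T$; hence $w^*$ occurs in $\T$.

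The main obstacle I anticipate is the book-keeping around block alignment: because $\kappa$ is a $2$-$3$-block substitution rather than an ordinary (letter-to-word) substitution, an occurrence of a factor inside $u^{(n)}$ need not begin at a position divisible by $2$, so one cannot directly apply $\kappa'$ to "the block containing $w$." The fix is routine — embed the occurrence in a window of $u^{(m)}$ that does start at an even position by enlarging the window by at most one letter at each end and, if necessary, passing to one more iterate; then $\kappa'$ acts cleanly on the $2$-blocks of that window. A secondary nuisance is the odd-length padding in the definition of $\kappa'$, but since padding only ever appends a single $0$ or $1$ at the very end, it never interferes with any fixed finite factor once we go far enough out in the sequence. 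Neither obstacle is serious; the heart of the argument is the one-line commutation identity $\kappa(\overline{ab})=\overline{\kappa(ab)}$.
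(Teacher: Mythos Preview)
Your key identity $\kappa(\overline{ab})=\overline{\kappa(ab)}$ is correct and is also the heart of the paper's proof, but the alignment step you call routine is a genuine gap. Suppose $v^{(n)}=v_0v_1\cdots$ occurs in $\T$ only at an odd position. Prepending one letter $c$ to align the window changes the $2$-block parsing to $(cv_0)(v_1v_2)(v_3v_4)\cdots$, and since $\kappa(ab)$ depends only on its first letter, applying $\kappa$ yields $f(c)f(v_1)f(v_3)\cdots$ (writing $f(a)=a\bar a a$). But $v^{(n+1)}=\kappa'(v^{(n)})=f(v_0)f(v_2)\cdots$: the two outputs sample the odd- versus the even-indexed letters of $v^{(n)}$ and are in general unrelated, so the $\kappa$-image of your enlarged window need not contain $v^{(n+1)}$ at all. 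For an ordinary letter-to-word substitution the enlargement trick would be harmless; it is precisely the $2$-block input structure that breaks it here. To rescue your forward induction you would have to prove that each $v^{(n)}$ actually occurs at an \emph{even} position of $\T$, and nothing in your hypothesis supplies this.

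By contrast, the paper runs the induction the other way, on the length of the factor, via desubstitution: one first verifies by hand that all sixteen length-$6$ factors of $\T$ have their complements in $\T$, and for $|u|\ge 7$ one extends $u$ slightly to a word $v$ of length $3n$ sitting at a position $\equiv 0\pmod 3$, writes $v=\kappa(w)$ with $|w|=2n<|u|$, and applies the induction hypothesis to the strictly shorter $w$ together with $\kappa(w^*)=v^*$. The finite base case and the downward (desubstitution) direction are what make the argument workable for a $2$-block substitution, whereas your forward iteration has to manufacture even-position alignment for the complement at every stage.
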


\begin{proof}  First one checks this for all 16 words of length 6 that occur in $\T$. Note that then also $w^*$ occurs for all $w$ with $|w|\le 6$. Here $|w|$ denotes the length of $w$. Let $u$ be a word of length $m\ge 7$. By adding at most 3 letters at the beginning and/or end of $u$ one can obtain a word $v$ with $|v|=3n$ that occurs in $\T$ at a position 0 modulo 3. But then Theorem \ref{th:main} gives that $v=\kappa(w)$ for at least one word $w$ occurring in $\T$. The length of $w$ is $|w|=2n$. Since $\kappa(w^*) = (\kappa(w))^*$ the  result follows by induction on $m=|u|$. For example, for $|u|=m=7$, one has $|v|= 9$, and so $|w|= 6$.   \end{proof}

\medskip

Here are some conjectured properties of $\T$.

\medskip

\begin{conjecture}$\T$ is reversal invariant, i.e., if the word $w=w_1\dots w_m$ occurs in $\T$ then $\overleftarrow{w}=w_m\dots w_1$ occurs in $\T$.\end{conjecture}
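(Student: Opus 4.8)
The plan is to prove the conjecture by the same inductive mechanism used in the preceding proposition, now combining reversal with the block substitution $\kappa$ rather than binary complementation. The key algebraic fact I would try to establish first is a ``reversal intertwining'' identity for $\kappa$: one checks on the four blocks that $\kappa$ does \emph{not} literally commute with reversal (for instance $\overleftarrow{\kappa(01)}=\overleftarrow{010}=010$ while $\kappa(\overleftarrow{01})=\kappa(10)=101$), so the naive argument fails. Instead I would look for a weaker statement of the form: for every word $w$ occurring in $\T$, the reversed word $\overleftarrow{\kappa(w)}$ also occurs in $\T$, provided one is allowed to adjust $w$ by a bounded edit at its ends and to shift the window by a bounded amount modulo $3$. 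Concretely, reversing a length-$3n$ factor $v=\kappa(w)$ at position $0\bmod 3$ produces a length-$3n$ factor at position $0\bmod 3$, so by Theorem~\ref{th:main} it is itself of the form $\kappa(w')$ for some factor $w'$ of $\T$ with $|w'|=2n$; the content of the lemma would be a description of $w'$ in terms of $\overleftarrow{w}$.

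Granting such a lemma, the induction runs exactly as before. First one verifies the base case by direct inspection: list the $16$ factors of length $6$ occurring in $\T$ and check that the reversal of each also occurs (this also covers all shorter factors, since any factor of length $\le 6$ extends to one of length $6$). For the inductive step, given a factor $u$ with $|u|=m\ge 7$, pad it by at most three letters at each end to a factor $v$ of length $3n$ sitting at a position $\equiv 0\bmod 3$; write $v=\kappa(w)$ with $w$ a factor of $\T$, $|w|=2n<m$; apply the induction hypothesis to obtain $\overleftarrow{w}$ (or the edited word $w'$ supplied by the lemma) as a factor of $\T$; apply $\kappa$ to it; and recover $\overleftarrow{u}$ as a sub-block of $\kappa(\overleftarrow{w})$ (up to trimming the padding). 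The bookkeeping with positions modulo $3$, padding lengths, and the parity of $n$ is identical in spirit to the proposition's proof.

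The main obstacle I anticipate is precisely the reversal-intertwining lemma: because $\kappa$ collapses $00,01$ to $010$ and $10,11$ to $101$, information about $w$ is lost, and reversing $\kappa(w)$ need not be $\kappa$ of anything simply related to $w$. The collapse depends only on the first letter of each block, so $\kappa(w)$ is determined by the subsequence of even-indexed letters of $w$, namely the string $(x_{2k})$; dually, the reversed word $\overleftarrow{\kappa(w)}$, read in blocks of three shifted appropriately, is governed by a different subsequence of $w$. I would try to show that these two derived sequences are related by reversal together with a shift, using the specific structure that $\kappa$ outputs $010$ or $101$ — i.e.\ that every output block is a palindrome. That palindromicity is the feature that makes $\T$ reversal-invariant while $\kappa$ itself is not reversal-equivariant, and pinning down exactly how the shift interacts with the parity of $n$ and the position modulo $3$ is where the real work lies. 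A secondary, more mundane risk is that the ``at most three letters at each end'' padding must be chosen compatibly with both the block structure and the reversal, which may force a slightly larger base case (say checking factors up to length $9$ or $12$) to make the induction close cleanly.
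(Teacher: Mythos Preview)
The paper does \emph{not} prove this statement: it is explicitly listed among ``conjectured properties of $\T$, which we think will be hard to prove.'' So there is no proof in the paper to compare your proposal against, and the relevant question is whether your outline actually constitutes a proof. It does not; you yourself identify the gap, and it is a real one.

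First, a logical slip. You write that ``reversing a length-$3n$ factor $v=\kappa(w)$ at position $0\bmod 3$ produces a length-$3n$ factor at position $0\bmod 3$, so by Theorem~\ref{th:main} it is itself of the form $\kappa(w')$ for some factor $w'$ of $\T$.'' This assumes that $\overleftarrow{v}$ occurs in $\T$ (indeed, at a position $\equiv 0\bmod 3$), which is precisely the conclusion to be proved. What you \emph{can} say non-circularly, using the palindromicity of $010$ and $101$, is that as an abstract string $\overleftarrow{\kappa(w)}=\kappa(w'')$ for any word $w''$ whose even-indexed letters are $w_{2n-2},w_{2n-4},\dots,w_0$; but you then need such a $w''$ to be a factor of $\T$ at an even position, and that is not supplied by the induction hypothesis on $\overleftarrow{w}$.

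This is exactly the obstruction: $\kappa$ depends only on the even-indexed letters of its argument, so the induction step for reversal of $\T$ reduces not to reversal invariance of $\T$ itself but to reversal invariance of the subsequence $(\T(2k))_{k\ge 0}$. That subsequence is not $\T$; by the paper's Theorem~\ref{th:subseq} (or directly) it is governed by a \emph{different} $2$-$3$-block substitution (indeed, the $\Tt$ sequence of Theorem~\ref{th:mainy} arises this way). Hence the self-referential induction that works for mirror invariance does not close for reversal, and your ``reversal-intertwining lemma'' would have to carry genuinely new content about a sequence other than $\T$. As you concede, you have not proved that lemma; without it the argument is only a plan, and the conjecture remains open.
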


\begin{conjecture} $\T$ is uniformly recurrent, i.e., each word that occurs in $\T$ occurs infinitely often, with bounded gaps.\end{conjecture}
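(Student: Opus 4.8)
\medskip

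\noindent\emph{A proposed line of attack.} The first step is to peel the degenerate part off of $\kappa$. By Theorem~\ref{th:main}, $\kappa(ab)$ does not depend on its second letter: $\kappa(ab)=a\,(1-a)\,a$. Hence, writing $U=(\T(2n))_{n\ge 0}$ and letting $\phi$ be the constant-length substitution $0\mapsto 010,\ 1\mapsto 101$, the fixed-point identity $\T[3k,3k+3)=\kappa(\T(2k)\T(2k+1))$ becomes $\T=\phi(U)$. A factor of $\phi(U)$ of length $m$ lies inside $\phi(y)$ for a factor $y$ of $U$ of length at most $\lceil m/3\rceil+1$, and an occurrence of $y$ in $U$ with gap $\le G$ yields an occurrence of that factor with gap $\le 3G+m$; so it suffices to prove $U$ uniformly recurrent. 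A short computation with the same identity gives that $U$ is the fixed point of the $2$-$3$-block substitution
$$\tau:\; 00\mapsto 001,\quad 01\mapsto 000,\quad 10\mapsto 111,\quad 11\mapsto 110,$$
that is, $\tau(ab)=a\,a\,(1-b)$; note that, unlike $\kappa$, this image depends on both letters.

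Second, I would run the ``lift and descend'' argument of the Proposition while tracking positions. Given a factor $z$ of $U$ with $|z|=m$, adjoin at most two letters at each end to embed $z$, at a fixed offset $\delta\in\{0,1,2\}$, into a block-aligned factor $z'=U[3a,3a+3n)=\tau(y)$ with $y$ a factor of $U$, $|y|=2n$ and $3n\le m+5$. By the fixed-point property, every occurrence of $y$ at an \emph{even} position $2q$ forces $U[3q,3q+3n)=\tau(y)=z'$, hence an occurrence of $z$ at $3q+\delta$; thus if $y$ occurs at even positions with gaps $\le 2G$ then $z$ occurs with gaps $\le 3G+m$. Since $|y|\approx\tfrac23 m<m$ for $m$ large, one wants to induct on length — but the statement to be propagated is now ``$y$ occurs at positions $\equiv 0\pmod 2$ with bounded gaps'', and repeating the reduction on $y$ again demands that its occurrences be even, so the modulus passes from $2$ to $4$, then to $8$, $16$, and so on.

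The obstacle is exactly this doubling of the residue modulus at every descent, against a factor that shrinks by a factor $\approx\tfrac23$ each time. After about $\log_{3/2}m$ steps one is reduced to deciding whether a word of bounded length occurs in $U$ — equivalently, in the base-$3/2$ digit-sum sequence modulo $2$ — along an arithmetic progression whose modulus is a fixed power of $m$ (namely $m^{\log_{3/2}2}$), with gaps bounded in terms of $m$, i.e.\ with positive density in that progression. Establishing such equidistribution seems to require genuinely new arithmetic input about base $3/2$; this is the same wall — iterating a block substitution that cannot be iterated — behind the long-open status of the analogous question for the Kolakoski sequence (cf.\ \cite{Two-block}). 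A second route, deducing bounded gaps for all factors of $\T$ from unique ergodicity of the subshift $\overline{\{\sigma^n\T:n\ge 0\}}$ generated by $\T$ together with strict positivity of all factor frequencies — which would force the unique invariant measure to have full support and hence the subshift to be minimal — confronts the same difficulty, since the positivity of those frequencies is again a distributional statement about base-$3/2$ digit patterns. Even the weaker conclusion that every factor merely recurs infinitely often does not seem to have a short proof and will likely need some base-$3/2$-specific input of its own.
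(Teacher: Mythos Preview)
There is nothing to compare against: in the paper this is stated as Conjecture~5 and is left open, with no proof or proof sketch. Your write-up is not a proof either, and you are explicit about that; it is an honest analysis of why the obvious descent via the $2$--$3$-block structure stalls.

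The intermediate computations you carry out are correct and in fact reproduce material already in the paper. Your observation that $\kappa(ab)$ depends only on $a$ gives $\T=\phi(U)$ with $U(n)=\T(2n)$, and your block substitution $\tau:\,ab\mapsto a\,a\,(1{-}b)$ fixing $U$ is precisely the $\kappa'$ of Theorem~\ref{th:mainy}; since ${\rm AFS}(N)={\rm SQ}(2N)$, your $U$ is $\Tt$. So the reduction ``$\T$ uniformly recurrent $\Leftarrow$ $\Tt$ uniformly recurrent'' is sound, but it merely trades one open instance for another of the same kind. The obstruction you isolate---that each descent step halves the required residue class while only shrinking the word length by a factor $\approx 2/3$, so after $O(\log m)$ steps one needs positive density of a bounded pattern along a progression of modulus $2^{O(\log m)}$---is exactly the ``iterates of $\kappa$ are undefined'' difficulty the paper flags after Theorem~\ref{th:main}, and your comparison with the Kolakoski situation is apt. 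In short: no error, no proof, and the paper claims none.
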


\begin{conjecture} The frequencies $\mu[w]$ of the words $w$ occurring in $\T$ exist. Two conjectured values: $\mu[00] = 1/10,\; \mu[01] = 4/10$. \end{conjecture}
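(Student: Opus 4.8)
The plan rests on a structural simplification that Theorem~\ref{th:main} hides: the image block produced by $\kappa$ depends only on the \emph{first} letter of the input pair, since $00,01\mapsto 010$ and $10,11\mapsto 101$. Writing the fixed-point equation $\T=\kappa'(\T)$ out letterwise therefore gives $\T_{3k}=\T_{2k}$, $\T_{3k+1}=\T_{3k}^{*}$ and $\T_{3k+2}=\T_{3k}$ for all $k\ge 0$. Consequently $\T=\Phi(E)$, where $E:=(\T_{2k})_{k\ge0}$ is the even-indexed subsequence and $\Phi$ is the genuine constant-length substitution $0\mapsto 010,\;1\mapsto 101$. Because $\Phi$ is non-erasing, of length $3$, and its images contain neither $00$ nor $11$, every occurrence of a short word in $\T$ is either internal to a single image block (an explicit bounded contribution) or straddles a block junction, where it is controlled by a short word of $E$. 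I would first prove this reduction carefully; it converts the existence and the values of the $\mu[w]$ into the existence and values of the word frequencies $\nu[\,\cdot\,]$ of $E$.

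Carrying the reduction out for digrams is clean. Each image block contains exactly one $01$ and one $10$ and no $00$ or $11$, while the junction between blocks $k$ and $k+1$ is precisely the digram $E_kE_{k+1}$. Counting occurrences in the prefix $\T_0\cdots\T_{3n-1}$ (write $C_{3n}[\,\cdot\,]$ for these counts and $C^{E}_{n}[\,\cdot\,]$ for the corresponding counts in $E_0\cdots E_{n-1}$) gives
\[
C_{3n}[00]=C^{E}_{n}[00],\quad C_{3n}[11]=C^{E}_{n}[11],\quad C_{3n}[01]=n+C^{E}_{n}[01],\quad C_{3n}[10]=n+C^{E}_{n}[10],
\]
and hence $\mu[00]=\nu[00]/3$, $\mu[11]=\nu[11]/3$, $\mu[01]=(1+\nu[01])/3$, $\mu[10]=(1+\nu[10])/3$. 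The complementation symmetry already proved forces $\nu[00]=\nu[11]$ and $\nu[01]=\nu[10]$, whence $\nu[0]=\nu[1]=1/2$. Thus the entire conjectured pair $\mu[00]=1/10,\ \mu[01]=4/10$ collapses to the single clean target: the digram frequencies of $E$ exist and $\nu[00]=3/10$ (forcing $\nu[01]=1/5$).

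The remaining task is to analyse $E$. Unfortunately $E$ inherits exactly the same $2$-$3$ self-similarity: grouping it into pairs $G_j:=(E_{2j},E_{2j+1})$, the fixed-point relations yield a length-$3$ production $\psi$ with $00\mapsto 001,\ 01\mapsto 000,\ 10\mapsto 111,\ 11\mapsto 110$, after which one must \emph{re-pair} the output into a word over the four-letter alphabet --- another $2$-$3$ block rule one level down. The approach I would take is a phase-resolved renewal recursion: fix a window length $\ell$ and track the vector whose entries are the numbers of occurrences of each word of length $\le\ell$, refined by the residue of their starting position modulo the current block length. The substitution data of $\Phi$ (respectively $\psi$) then express the phase-resolved counts in a prefix of length $3n$ as a fixed linear image of those at length $2n$. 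Existence of $\nu$, equivalently of $\mu$, becomes convergence of the normalised count vectors; combining the fixed-point equation of this recursion with the symmetry $\nu[00]=\nu[11]$, $\nu[01]=\nu[10]$ should force the limit to $\nu[00]=3/10$ and thereby deliver the stated values.

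The hard part --- and I expect it to be genuinely hard --- is the convergence step. Because the scaling factor is $3/2$ with $\gcd(2,3)=1$, the recursion is not iteration of one primitive matrix but a non-stationary matrix cocycle driven by the base-$3/2$ odometer, so Perron--Frobenius is unavailable and the parity/phase corrections are reintroduced afresh at every level rather than being absorbed. Convergence of the normalised counts is exactly unique ergodicity of the subshift generated by $\T$, and taming the infinite cascade of phase corrections is precisely the obstruction that has kept the analogous Kolakoski frequency problem open. A sensible order of attack is therefore to first secure the weaker conjectures (uniform recurrence and, ideally, linear factor complexity) so as to bound the simplex of shift-invariant measures, and only then to prove that the self-similar measure equations above have a unique solution. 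Short of a new idea for controlling the phases, the realistic deliverable is the conditional statement that the reduction and symmetry already establish: \emph{if} the frequencies exist then $\nu[00]=3/10$, so that $\mu[00]=1/10$ and $\mu[01]=4/10$ as conjectured.
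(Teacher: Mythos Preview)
The statement is a \emph{conjecture}; the paper offers no proof and explicitly places it among properties it ``think[s] will be hard to prove''. So there is no paper argument to compare against, and your proposal is (as you yourself acknowledge) not a proof but an outline whose decisive step remains open.

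Your reduction is correct and worth recording: from $\T_{3k}=\T_{2k}$, $\T_{3k+1}=\T_{2k}^{*}$, $\T_{3k+2}=\T_{2k}$ one indeed gets $\T=\Phi(E)$ with $E_k=\T_{2k}$ and $\Phi:0\mapsto010,\ 1\mapsto101$, and the digram bookkeeping $\mu[00]=\nu[00]/3$, $\mu[01]=(1+\nu[01])/3$ is right. It is also true (and you essentially rediscover this) that $E$ is the sequence $\Tt$ of Theorem~\ref{th:mainy}, fixed by the $2$-$3$-block rule $\kappa'$; so the problem for $E$ is of exactly the same nature as for $\T$, not easier.

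There is, however, a genuine gap in what you present as already secured. You write that ``the complementation symmetry already proved forces $\nu[00]=\nu[11]$ and $\nu[01]=\nu[10]$''. Proposition~3 only shows that the \emph{language} of $\T$ is closed under complementation; equality of \emph{frequencies} is precisely Conjecture~7, still open. Even granting frequency mirror invariance, your relations give only $\nu[00]+\nu[01]=\tfrac12$, one equation in two unknowns; the value $\nu[00]=3/10$ is not forced without a further self-similar equation for $E$ that you do not derive. Hence the ``conditional deliverable'' you announce --- that existence of the frequencies alone implies $\mu[00]=1/10$, $\mu[01]=4/10$ --- is not established by your argument: at minimum you are also assuming Conjecture~7 and, implicitly, that the induced fixed-point equations for $\nu$ have a unique solution, which is again the unique-ergodicity obstacle you identify. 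Your diagnosis that this obstacle is of Kolakoski type is in line with the paper's own view; that is exactly why the statement is labelled a conjecture.
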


\begin{conjecture}$\mu$ is mirror invariant, i.e., $\mu[w]=\mu[w^*]$ for all words $w$.\end{conjecture}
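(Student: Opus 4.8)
The plan is to reduce the statement to the (conjectured) unique ergodicity of the subshift $X_{\T}$ generated by $\T$, exploiting two facts already at hand: the complement-closure of the language of $\T$ established in the Proposition above, and the identity $\kappa(w^*)=(\kappa(w))^*$, which is immediate from the four defining rules, since each rule $ab\to\kappa(ab)$ sends $a^*b^*$ to $\kappa(ab)^*$. I would first reformulate the problem: an occurrence of $w^*$ in $\T$ at a position $i$ is exactly an occurrence of $w$ at position $i$ in the complemented sequence $\T^*:=(\T(0)^*,\T(1)^*,\dots)$, so $\mu[w^*]$ is just the frequency of $w$ in $\T^*$. It therefore suffices to show that every word has the same frequency in $\T$ and in $\T^*$.

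Next I would note that $\T^*$ is itself a fixed point of the block substitution: applying $*$ to the defining relation $\T(3k)\T(3k+1)\T(3k+2)=\kappa(\T(2k)\T(2k+1))$ and using $\kappa(w^*)=(\kappa(w))^*$ shows $\T^*=\kappa'(\T^*)$, i.e.\ $\T^*$ is the fixed point of $\kappa'$ starting with $1$. By the Proposition the set of factors of $\T$ is closed under $*$, hence $\T$ and $\T^*$ have exactly the same language; granting the uniform recurrence of $\T$ (Conjecture~5), both are then uniformly recurrent with the same language and therefore generate one and the same minimal subshift $X$, with $\T,\T^*\in X$.

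Finally, complementation $c:x\mapsto x^*$ maps $X$ onto $X$ and commutes with the shift, so the push-forward $c_*\mu$ of the frequency measure $\mu$ (whose existence is Conjecture~6) is again a shift-invariant probability measure on $X$, and $c_*\mu[w]=\mu[w^*]$. If $X$ is uniquely ergodic, then $c_*\mu=\mu$, which is precisely the asserted mirror invariance; equivalently, in the uniquely ergodic case the frequency of any word is the same along every point of $X$, in particular along $\T$ and along $\T^*$, and by the reformulation of the first paragraph this is the claim.

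The step I expect to be the genuine obstacle is exactly the unique ergodicity of $X$ — and note that Conjecture~6 alone is not enough, since a minimal system may carry several ergodic measures while $\T$ is generic for one of them. Because the iterates of $\kappa$ are undefined, the usual machinery for primitive substitutions (a primitive incidence matrix, Perron--Frobenius, uniform convergence of block frequencies) is unavailable, and one meets the same wall as for the Kolakoski sequence, where even the density of the letter $1$ is not known to exist. A successful attack would presumably require a renormalization analysis of how the admissible level-$n$ blocks nest inside the level-$(n+1)$ blocks, with uniform control, as $n\to\infty$, of the phase with which a given factor of $\T$ straddles the block boundaries; obtaining that uniform control is the crux, and it is why Conjecture~7, like Conjectures~5 and~6, is expected to be hard.
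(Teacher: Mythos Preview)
There is nothing to compare against: in the paper this statement is explicitly a \emph{conjecture}, listed among properties of $\T$ that the author ``think[s] will be hard to prove.'' No proof, sketch, or strategy for it is offered.

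Your proposal is not a proof either, and you say so. What you have written is a clean and correct \emph{conditional} argument: granting unique ergodicity of the subshift $X_\T$, mirror invariance of $\mu$ follows from the complement-closure of the language (the Proposition) together with $\kappa(w^*)=(\kappa(w))^*$, via the push-forward $c_*\mu$ of the frequency measure under complementation. That reduction is sound. Two small remarks: (i) your sequence $\T^*$ is a fixed point of the \emph{same} $\kappa$, not of a different map, since $\kappa$ commutes with $*$; writing $\kappa'$ risks confusion with the two distinct uses of that symbol in the paper; (ii) you do not actually need $\T^*$ to be a fixed point at all---once you know $\T$ and $\T^*$ have the same language and $\T$ is uniformly recurrent, $\T^*\in X_\T$ is immediate, and that is all the final step uses.

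You are also right that the obstacle is precisely unique ergodicity, and that this is strictly stronger than Conjecture~6 (existence of frequencies along $\T$). Your closing diagnosis---that the non-iterability of $\kappa$ blocks the standard Perron--Frobenius route and leaves one facing a Kolakoski-type difficulty---matches the author's own framing. So your write-up is an accurate account of why the conjecture is open, not a resolution of it; since the paper contains no proof, that is the appropriate outcome.
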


\begin{conjecture}$\mu$ is reversal invariant, i.e., $\mu[w]=\mu[\overleftarrow{w}]$ for all words $w$.\end{conjecture}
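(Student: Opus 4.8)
The natural strategy is to reduce the assertion to the preceding conjectures together with a routine ergodic-theoretic step. Suppose first that the subshift $X\subset\{0,1\}^{\mathbb{N}}$ generated by $\T$ is minimal and uniquely ergodic with invariant measure $\mu$; this would follow from the conjectures above that $\T$ is uniformly recurrent and that the frequencies $\mu[w]$ exist. Pass to the bi-infinite hull $\widehat{X}\subset\{0,1\}^{\mathbb{Z}}$, which has the same language $L=L(\T)$. If $L$ is closed under reversal (the conjecture that $\T$ is reversal invariant), then the reversal map $r$ given by $(rx)_n=x_{-n}$ is a homeomorphism of $\widehat{X}$ onto itself that conjugates the shift $T$ to $T^{-1}$, so $r_*\mu$ is again a $T$-invariant probability measure, and unique ergodicity forces $r_*\mu=\mu$; this unravels to $\mu[w]=\mu[\overleftarrow{w}]$ for every word $w$. (The same scheme proves the preceding conjecture $\mu[w]=\mu[w^*]$ at once, with $r$ replaced by letterwise complementation, since the Proposition already shows that $\widehat{X}$ is invariant under that map.) So the content lies entirely in the two prerequisites: unique ergodicity, and reversal invariance of $L$.

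For reversal invariance of $L$ the tool that suggests itself is the \emph{palindromicity of $\kappa$}: each image $\kappa(ab)\in\{010,101\}$ is a palindrome, and in fact $\kappa(ab)=a\,a^*\,a$ depends only on $a$. Hence for a word $w=w_1\cdots w_{2n}$ of even length one has $\overleftarrow{\kappa(w)}=\kappa(\rho w)$, where $\rho$ reverses the \emph{order} of the length-$2$ blocks of $w$ while keeping each block intact. Combined with the padding argument used in the proof of the Proposition, this should reduce reversal invariance of $L(\T)$, and (with some extra bookkeeping) the frequency identity itself, to the analogous statements for the sequence $(\T(2i))_{i\ge0}$ of even-indexed letters. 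Using $\T(3N)=\T(2N)$, $\T(3N+1)=\T(2N)^*$ and $\T(3N+2)=\T(2N)$ from Theorem~\ref{th:SD}, one checks that $(\T(2i))_{i\ge0}$ is itself the fixed point of the $2$-$3$-block substitution $ab\mapsto a\,a\,b^*$. Here the strategy stalls: the images $001$ and $110$ of this substitution are \emph{not} palindromes, so the reduction does not close on itself, and one is pushed into the interleaved structure of the pair $(\T(2i))_{i\ge0}$, $(\T(2i+1))_{i\ge0}$.

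An alternative route attacks the measure directly, bypassing the language. One would set up a self-referential linear system for the vector $(\mu[w])_{|w|=\ell}$ by counting the occurrences of each $w$ inside the images $\kappa(ab)$ and across their borders, and then exhibit a symmetry of this system under $w\leftrightarrow\overleftarrow{w}$ coming from the palindromicity of $\kappa$, which would give $\mu[w]=\mu[\overleftarrow{w}]$ outright. The decisive obstacle is the difficulty already emphasized above: the iterates of $\kappa$ are undefined, so there is no genuine substitution matrix and none of the usual Perron-Frobenius or renormalization machinery applies; this is precisely the phenomenon that has obstructed the parallel questions for the Kolakoski sequence. In practice one would first have to prove, by a genuinely new argument, that the frequencies exist at all; granting that, the most promising way to finish seems to be the construction of an explicit measure-preserving bijection between the occurrences of $w$ and those of $\overleftarrow{w}$, built up level by level from the block-reversal $\rho$ and the palindromicity of $\kappa$.
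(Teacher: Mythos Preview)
The statement you are attempting is listed in the paper as a \emph{conjecture}; no proof is offered there, and the author explicitly ties the difficulty to the Kolakoski-type obstruction that iterates of $\kappa$ are undefined. So there is no argument in the paper to compare against, and your write-up should be read as a map of the difficulties rather than a proof --- which, to your credit, is how you present it. One genuine slip in your conditional reduction: minimality (from the uniform-recurrence conjecture) together with the existence of word frequencies \emph{along the single sequence $\T$} (Conjecture~6 as stated) does \emph{not} imply unique ergodicity. There exist minimal subshifts carrying several ergodic measures, and any generic point for one of them has all word frequencies. To run the push-forward argument with the flip map on the bi-infinite hull you need unique ergodicity, hence the stronger hypothesis that frequencies exist \emph{uniformly}; that is not what Conjecture~6 says. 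Granting unique ergodicity and reversal closure of the language, the rest of that paragraph is correct and standard.

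Your attempt at reversal closure of $L(\T)$ is where the real content would lie, and you diagnose the obstruction accurately. The identity $\overleftarrow{\kappa(w)}=\kappa(\rho w)$, with $\rho$ reversing the order of the length-$2$ blocks, is correct (each image $a\,a^*\,a$ is a palindrome), but $\rho w\neq\overleftarrow{w}$ in general and there is no reason $\rho w$ should lie in $L(\T)$, so no induction closes. Your computation that the even-index subsequence $(\T(2i))_{i\ge0}$ satisfies the $2$-$3$-block rule $ab\mapsto a\,a\,b^*$ is correct and neatly exhibits why the palindromicity trick does not iterate. The alternative ``attack the measure directly'' paragraph is honest about the missing renormalization/Perron--Frobenius machinery; this is exactly the barrier the paper flags. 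In sum: no proof here, but the exploration is sound apart from the unique-ergodicity overreach, and it lands on the same obstacles the paper leaves open.
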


\begin{conjecture} {\bf (J. Shallit)} The critical exponent (=largest number of repeated blocks) of $\T$ is 5. \end{conjecture}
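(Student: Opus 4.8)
\medskip
\noindent\emph{A possible approach to Conjecture~8.}
The plan is to prove a matching lower and upper bound on the critical exponent of $\T$. The lower bound would be computational-plus-verification: generate a long prefix of $\T$ from the recursion $\T(3N+r)=\T(2N)\oplus(r\bmod 2)$ of Theorem~\ref{th:SD}, locate the repetition(s) of largest exponent, and then prove that this extremal family genuinely occurs in $\T$ by exhibiting explicit starting positions $N$ and unwinding the recursion — each occurrence reduces, after boundedly many applications of $\T(3N+r)=\T(2N)\oplus r$, to a statement about a fixed finite initial segment of $\T$. I expect the extremal repetitions to be tied to the alternating blocks, consistently with the conjectured value $\mu[01]=4/10$.

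For the upper bound one would exploit the desubstitution made available by Theorem~\ref{th:main}. Because $\kappa(ab)$ depends only on $a$ — indeed $\kappa(ab)=a\bar a a$ — the block decomposition of $\T$ is actually the image of an ordinary $3$-uniform morphism: writing $\T^{(1)}$ for the even-indexed (equivalently every-third) subsequence $(\T(2k))_k=(\T(3k))_k$, one has $\T=\phi(\T^{(1)})$ with $\phi\colon 0\mapsto 010,\ 1\mapsto 101$, and $\T^{(1)}$ is itself the fixed point of the $2$-$3$-block substitution $ab\mapsto aa\bar b$ (this follows from the $s_{3/2}$ recursion at the next scale). A repetition $w^e$ in $\T$ of period $p$ divisible by $3$ then pulls back, after shifting the occurrence by at most $p$ to align it with the $\phi$-blocks, to a repetition in $\T^{(1)}$ of period $p/3$ and exponent at least $e-O(1/p)$; the residues $p\equiv1,2\pmod 3$ would be handled separately by noting that $\phi$ forces extra structure on $w$ (for instance $\phi(v)$ is alternating if and only if $v$ is, which already bounds the period-$2$ repetitions of $\T$ in terms of those of $\T^{(1)}$). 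Iterating down the tower $\T\to\T^{(1)}\to\T^{(2)}\to\cdots$ shrinks the period geometrically while the cumulative exponent loss stays summable, so that any repetition of period $p$ has exponent at most $5+C/p$ for an explicit $C$; a finite check over short periods, using the mirror-invariance Proposition to halve the casework, then removes the error term and pins the value to $5$.

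The real difficulty — and the reason the paper flags this conjecture as hard — is that neither $\kappa$ nor the substitution $ab\mapsto aa\bar b$ has a well-defined iterate, so the desubstitution is not an operation taking place inside a single self-similar word. Concretely, the synchronization of $\phi$ is delicate exactly on the long alternating stretches of $\T$, which is precisely where the highest-exponent repetitions occur, and the tower of even-indexed subsequences does not close: the subsequence $\T^{(2)}=(\T^{(1)}(2k))_k$ is not governed by a clean rule on the even positions of $\T^{(1)}$, because the defining rule of $\T^{(1)}$ at positions $\equiv2\pmod3$ reaches an odd position. Making the pull-back rigorous will therefore require either carrying a bounded ``memory window'' to recover an effective recognizability/iterability statement for $2$-$3$-block fixed points, or setting up the desubstitution directly on $\T^{(1)}$ and controlling the interaction between the period-$3$ block boundaries and the period $p$ combinatorially. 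The hardest single point is quantitative: one must show the exponent loss per desubstitution step is small enough that the recursion returns exactly $5$, matching the lower-bound family, rather than merely some finite constant. A sensible intermediate target, much more accessible, is to prove only that $\T$ has finite critical exponent, i.e., is $\beta$-power-free for some $\beta$; the crude form of the argument above should already suffice for that, and sharpening $\beta$ to $5$ is the true content of Shallit's conjecture.
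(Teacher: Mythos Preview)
The statement is listed in the paper as a \emph{conjecture}; the paper offers no proof, no sketch, and no strategy beyond the remark that such properties ``will be hard to prove''. Your submission is likewise not a proof but an outline of a possible approach, and you say so explicitly. So there is no proof in the paper to compare against.

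That said, your intermediate observations are correct and worth recording: the identity $\T(3k)=\T(2k)$ does give $\T=\phi(\T^{(1)})$ with $\phi(a)=a\bar a a$ for $\T^{(1)}=(\T(2k))_k$, and a short computation confirms that $\T^{(1)}$ is fixed by the $2$-$3$-block rule $ab\mapsto aa\bar b$; the claim that $\phi(v)$ is alternating iff $v$ is also checks out. You correctly diagnose why the obvious desubstitution argument stalls: passing to $\T^{(2)}=(\T^{(1)}(2k))_k$ forces you to read $\T^{(1)}$ at the odd position $4m+1$, so the tower of even-index subsequences does not close into a single self-map. This is exactly the ``iterates of $\kappa$ are undefined'' obstruction the paper flags.

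What you have, then, is a reasonable plan with the hard step honestly identified, not a proof. The conjecture remains open in the paper, and your write-up does not change that status; if you want to contribute something rigorous here, the intermediate target you name --- finite critical exponent --- is the natural first goal, and even that would be new relative to what the paper establishes.
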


\section{Base 3/2 and base 1/2$\cdot$3/2}\label{sec:32}

Many authors refer to the paper \cite{Akiyama-Frougny} from  Akiyama,  Frougny, and  Sakarovitch for the properties of base 3/2 expansions (see, e.g., \cite{Propp}, \cite{Rigo-Stip}).
However, the $p/q$ expansions studied in  paper  \cite{Akiyama-Frougny} are different from the
3/2 expansions that are usually considered as in Equation (\ref{eq:exp}). In paper \cite{Akiyama-Frougny}:
\begin{equation}\label{eq:frou}
    N= \sum_{i\ge 0} d_i\,\frac1q\Big( \frac{p}q\Big)^i,
 \end{equation}  \vspace*{-.0cm}
  with digits $d_i=0,1$ or 2. We write  ${\rm AFS}(N)$ for the expansion of $N$.

    \medskip

Here is the table given in  \cite{Akiyama-Frougny} for the case $p=3, q=2$.

 \begin{center}
 \begin{tabular}{|c|c|c|c|c|c|c|c|c|c|c|c|}
 \hline \noalign{\smallskip}
 $N$ & 0 & 1 & 2 & 3 & 4 & 5 & 6 & 7 & 8 & 9 & 10\\[.04cm]\noalign{\smallskip}
 \hline \noalign{\smallskip}
 ${\rm AFS}(N)$ & $\varepsilon$ & 2 & 21 & 210 & 212 & 2101 & 2120 & 2122 & 21011 & 21200 & 21202\\[.04cm]\noalign{\smallskip}
 \hline
 \end{tabular}
 \end{center}

 \medskip

These expansions will not even be found in the OEIS (at the moment).

\medskip

The situation is clarified in the paper \cite{Frougny-Klouda} by Frougny and Klouda. Here both representations are considered and called respectively base $p/q$ and base ${\small 1}/q\!\cdot\!p/q$ representations.

\medskip

A combination of the results in  \cite{Akiyama-Frougny} and \cite{Frougny-Klouda} yields a proof of the uniqueness of the base 3/2 expansions $({\rm QS}(N))$.
There is also a direct proof of uniqueness in \cite{Edgar-}, Theorem 1.1.

Note that  ${\rm AFS}(N)={\rm QS}(2N)$ for $N>0$. So uniqueness of the base 3/2 representation implies immediately uniqueness of the $1/2\!\cdot\!3/2$ representation ${\rm AFS}(N)$. This observation obviously extends to  base $p/q$.

\medskip

Next we consider the question whether also the sequence  $\Tt$, the sum of digits function modulo 2 of the base  $1/2\!\cdot\!3/2$ representation, is fixed point of a 2-block substitution.
This is indeed the case, and this 2-block substitution is given by Rigo and Stipulanti in \cite{Rigo-Stip}.

\begin{theorem}{\rm(\cite{Rigo-Stip})} \label{th:mainy} $\Tt$ is the  fixed point with prefix $00$ of the $2$-$3$-block substitution
$$\hspace{6cm} \kappa': \; \left\{\begin{aligned}
00 & \rightarrow 001\\[-.1cm]
01 & \rightarrow 000  \\[-.1cm]
10 & \rightarrow 111  \\[-.1cm]
11 & \rightarrow 110 \end{aligned}\hspace*{16cm}\right\}$$
\end{theorem}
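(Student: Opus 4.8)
The plan is to mimic the proof of Theorem~\ref{th:SD}, but now for the base $1/2\!\cdot\!3/2$ digit string. The key fact to establish first is a recursion for the digits $d_i\big({\rm AFS}(N)\big)$, and for this I would exploit the identity noted in the excerpt, ${\rm AFS}(N) = {\rm QS}(2N)$ for $N>0$. Thus $s_{3/2\text{-AFS}}(N) = s_{3/2}(2N)$, and the problem reduces to understanding the sequence $\big(s_{3/2}(2N) \bmod 2\big)_{N\ge 1}$, i.e.\ the even-indexed terms of $\T$ (shifted), together with the correct initialization giving the prefix $00$. So I would first pin down, from the uniqueness of the base $3/2$ representation (Section~\ref{sec:32}) and the digit recursion $d(3M+r) = d(2M)+r$ already derived in the proof of Theorem~\ref{th:SD}, a recursion expressing $s_{3/2}(2N)$ for $N$ in the residue classes of a suitable modulus in terms of $s_{3/2}$ at smaller even arguments.

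The concrete step is: write $2N$ in terms of the recursion $3M+r$. Since $2N$ is even, the residue $r \equiv 2N \bmod 3$ runs through $0,1,2$ as $N$ runs through residues mod $3$; more usefully, I would iterate the recursion once more to land on an even argument again. Because $s_{3/2}(3M+r) = s_{3/2}(2M)+r$, applying this twice relates $s_{3/2}$ at an even argument $2N$ to $s_{3/2}(2M)$ where $2M \approx (2/3)^2 \cdot 2N = (4/9)\cdot 2N$; choosing the right grouping (six consecutive even integers $2N, 2N+2, \dots, 2N+10$ mapping down under two applications) should produce exactly a $2$-$3$-block structure on the pair sequence. Reducing all arithmetic modulo $2$ then collapses the infinite alphabet to $\{0,1\}$ and, after checking the four input pairs $00,01,10,11$ produce the four outputs $001,000,111,110$ and that the initial pair is $00$, yields the stated block substitution $\kappa'$.

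Equivalently, and perhaps more cleanly, since $\Tt(N) = \T(2N)$ for $N\ge 1$ (up to the prefix bookkeeping), one can try to \emph{derive} $\kappa'$ directly from $\kappa$ of Theorem~\ref{th:main}: the even-indexed subsequence of a fixed point of a $2$-$3$-block substitution is again describable by a block substitution, because $\kappa$ acts on blocks of $6$ input letters (two super-blocks) producing $9$ output letters, and the even positions among those $9$ form a predictable pattern. Carrying out this ``induced substitution on even positions'' computation — tracking which output coordinates are even and re-expressing them as a function of the length-$4$ or length-$6$ input window — and matching the result with $\kappa'$, would give the theorem. This is essentially the ``general result on $p$-$q$-block substitutions'' the abstract promises, specialized here.

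The main obstacle I anticipate is the index bookkeeping near the start: ${\rm AFS}(0) = \varepsilon$ has empty digit string (sum $0$), and the shift ${\rm AFS}(N) = {\rm QS}(2N)$ only holds for $N>0$, so the claimed prefix $00$ of $\Tt$ requires care at $N=0,1$ — one must verify $s_{3/2\text{-AFS}}(0) = 0$ and $s_{3/2\text{-AFS}}(1) = 2 \equiv 0$, matching $\Tt$ starting $00$, and then check that the recursion kicks in consistently from there so that $\kappa'$ applied to the prefix reproduces the sequence. A secondary, purely mechanical obstacle is getting the right modulus for the block decomposition: one must confirm that two applications of the digit recursion genuinely organize the even arguments into disjoint windows of the right size, with no off-by-one overlap, so that the $2$-$3$-block substitution is well-defined on the pair sequence. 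Once these are settled the verification of the four rules of $\kappa'$ is a finite check.
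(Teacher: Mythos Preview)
Your second approach---deriving $\kappa'$ from $\kappa$ by passing to the even-indexed subsequence---is precisely the route the paper takes. The paper packages this as a general lemma (Theorem~\ref{th:subseq}): if $x$ is a fixed point of a $p$-$q$-block substitution, then $(x(rN))$ is a coding of a fixed point of a $p$-$q$-block substitution on the alphabet of length-$r$ words. Applied with $r=2$, one writes down the induced $4$-$6$-block substitution on $\T$ (your ``$6$ input $\to$ $9$ output'' is the wrong granularity; two $\kappa$-blocks, i.e.\ $4$ input letters giving $6$ output letters, is what aligns the even positions cleanly), passes to the pair alphabet $\{00,01,10,11\}$ to obtain a $2$-$3$-block substitution there, and then codes each pair to its first letter, recovering $\kappa'$.

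Your first approach---a direct recursion from the digit identity $s_{3/2}(3M+r)=s_{3/2}(2M)+r$---is a genuinely different and more elementary route, but you have overcomplicated it: a \emph{single} application already sends an even argument to an even argument, so no second iteration is needed. Setting $T(N):=s_{3/2}(2N)$ and writing $N=3K,3K+1,3K+2$, one reads off immediately
\[
T(3K)=T(2K),\qquad T(3K+1)=T(2K)+2,\qquad T(3K+2)=T(2K+1)+1,
\]
which modulo $2$ is exactly the map $(a,b)\mapsto(a,a,1-b)$, i.e.\ $\kappa'$. The window bookkeeping you anticipated as an obstacle therefore evaporates, and the prefix check $\Tt(0)=\Tt(1)=0$ is immediate from the table. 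This route bypasses the enlarged alphabet entirely; the paper's route, by contrast, buys a reusable general statement about arithmetic subsequences of block-substitution fixed points.
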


In \cite{Rigo-Stip} the proof of Theorem \ref{th:mainy} is based on a  generalization of Cobham's theorem to what are called $\mathcal S$-automatic sequences built on tree languages with a periodic labeled signature.
Here we consider a more direct route, based on a simple closure property of $p$-$q$-block-substitutions. Recall that a coding is a letter to letter map  from one alphabet to another.

\begin{theorem} \label{th:subseq} Let $x=(x(N))$ be a  fixed point of a  $p$-$q$-block substitution. Let $r$ be a positive integer. Then $y=(x(rN))$ is a coding of a $p$-$q$-block substitution.
\end{theorem}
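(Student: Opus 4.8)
The plan is to show that the map $N \mapsto x(rN)$ can be generated by iterating a $p$-$q$-block substitution on a suitably enlarged alphabet, followed by a coding that recovers the values $x(rN)$. The key observation is a ``base change'' identity analogous to the recursion used in the proof of Theorem~\ref{th:SD}: if $\sigma$ is the given $p$-$q$-block substitution with fixed point $x$, then for every $N\ge 0$ and every $j\in\{0,1,\dots,q-1\}$ the letter $x(qM+j)$ is the $j$-th letter of $\sigma$ applied to the block $x(pM)\,x(pM+1)\cdots x(pM+p-1)$. So reading $x$ along the arithmetic progression $rN$ amounts to reading a single block of $\sigma$-image at a controlled offset, and the offset evolves in a way that is itself eventually periodic. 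This suggests taking as the new alphabet the set of pairs (block of $p$ consecutive letters of $x$ starting at a multiple of $p$, together with an offset modulo the relevant period), and defining a $p$-$q$-block substitution on these pairs that mirrors one application of $\sigma$.

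First I would set up the precise combinatorics of offsets. Writing $r = q^s t$ with $\gcd(t,q)=1$ (if $q\nmid r$ the analysis is even simpler, but the general case needs this normalization), one tracks, for the index $rN$, how it decomposes through the substitution structure: $rN = qM + j$ with $M = \lfloor rN/q\rfloor$ and $j = rN \bmod q$, and then $M$ in turn relates to $pN'$ for the ``next level'' index. The relevant bookkeeping data is finite — essentially a residue of $N$ modulo some power of $q$ (or of $q/\gcd(p,q)$) — because $r$ is fixed. Second, I would define the enlarged alphabet $B$ to consist of pairs $(w, c)$ where $w$ ranges over the finitely many length-$p$ factors of $x$ occurring at positions $\equiv 0 \pmod p$ and $c$ ranges over the finite set of bookkeeping residues; then define a $p$-$q$-block substitution $\tau$ on $B$ by: $\tau$ applied to a length-$p$ block of such pairs reads off the corresponding length-$q$ block of $\sigma$-images, repackaged as a length-$q$ word over $B$ with updated residues. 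Third, I would check that $\tau$ has a fixed point $z$ whose first letter encodes the initial block $x(0)\cdots x(p-1)$ and residue $0$, and that the coding $\pi(w,c) = $ (the appropriate single letter of $w$ or of its $\sigma$-image selected by $c$) sends $z(N)$ to $x(rN)$. This last verification is an induction on $N$ using the offset recursion, entirely parallel to the two-line induction in the proof of Theorem~\ref{th:SD}.

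The main obstacle I anticipate is the offset bookkeeping when $\gcd(p,q) > 1$ or when $q \nmid r$: one must confirm that the set of offsets/residues actually visited is finite and closed under the update rule, so that $B$ is a finite alphabet and $\tau$ is well defined. For the paper's case of interest, $p=2$, $q=3$, and small $r$, this is transparent — the offsets live in a fixed finite set — but stating it cleanly in general requires identifying the right modulus (a power of $q$ times something coprime) and verifying the recursion closes. A secondary, more cosmetic point is the initial-segment behavior: a $p$-$q$-block substitution with $q>p$ is expanding, so its fixed point is obtained by iterating from the correct starting letter, and one should make sure the pair $(x(0)\cdots x(p-1),\,0)$ is genuinely a $\tau$-fixed starting letter, i.e.\ that it is the first letter of $\tau$ applied to the block beginning with it; this follows from $x$ being a $\sigma$-fixed point but deserves an explicit line.
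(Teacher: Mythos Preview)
Your framework --- enlarge the alphabet, find a $p$-$q$-block substitution on it, then code back down --- is exactly right, but your choice of enlarged alphabet is where the proposal stalls. You take pairs $(w,c)$ with $w$ a length-$p$ block of $x$ at a position $\equiv 0\pmod p$ and $c$ an unspecified residue, and then hope that a $p$-$q$-block rule on these pairs can be extracted from the recursion $rN=qM+j$. But that recursion expresses $x(rN)$ in terms of the $p$-block of $x$ at position $pM$, and $M=\lfloor rN/q\rfloor$ bears no simple relation to the progression $r\mathbb N$ or to $p\mathbb N$ in a way synchronized with $y$; nothing you have written explains why the data $(z(pk),\dots,z(pk+p-1))$ should determine $(z(qk),\dots,z(qk+q-1))$. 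The obstacles you list in your final paragraph (finiteness of the residue set, closure of the update rule, the role of $\gcd(p,q)$) are the actual content of the argument in your setup, and they are left entirely open.

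The paper's proof makes a different, much simpler, choice of alphabet that dissolves all the bookkeeping. Observe first that a $p$-$q$-block substitution is automatically a $pr$-$qr$-block substitution: just apply it to $r$ consecutive $p$-blocks to produce $r$ consecutive $q$-blocks. Now take as new alphabet the length-$r$ words occurring in $x$. A $pr$-block of $x$ is a $p$-tuple of such letters and a $qr$-block is a $q$-tuple, so the $pr$-$qr$-block substitution on the old alphabet \emph{is} a $p$-$q$-block substitution on the new one, with fixed point $z(N)=x(rN)\,x(rN+1)\cdots x(rN+r-1)$. The coding ``first letter of the $r$-block'' then gives $\pi(z(N))=x(rN)=y(N)$. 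No offsets, no residues, no $\gcd$ case analysis --- the entire proof is those three sentences. The idea you are missing is to track length-$r$ blocks of $x$ aligned to $r\mathbb N$, rather than length-$p$ blocks aligned to $p\mathbb N$ decorated with offset data.
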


\begin{proof} If $x$ is a fixed point of a  $p$-$q$-block substitution, then $x$ is also fixed point of a  $pr$-$qr$-block substitution. As new alphabet, take the words of length $r$ occurring in $x$. On this alphabet, the $pr$-$qr$-block substitution induces a $p$-$q$-block substitution in an obvious way. Mapping each word of length $r$ to its first letter is a coding that gives the result. \end{proof}

\noindent {\it Alternative proof of Theorem \ref{th:mainy}}.\, We apply Theorem \ref{th:subseq} with $r=2$. The $4$-$6$-block-substitution is given by
\begin{align*}
& 0010\mapsto 010101,\;  0100\mapsto 010010,\;  0101\mapsto 010010,\; 0110\mapsto 010101,   \\
& 1001\mapsto 101010,\;  1010\mapsto 101101,\;  1011\mapsto 101101,\; 1101\mapsto 101010.
\end{align*}     
Coding $00\mapsto a,\, 01\mapsto b,\, 10\mapsto c,\, 11\mapsto d$, this induces the 2-3-block substitution
$$ ac\mapsto bbb, ba\mapsto bac, bb \mapsto bac, bc \mapsto bbb, cb \mapsto ccc, cc \mapsto cdb, cd \mapsto cdb, db \mapsto ccc.$$
If we code further $a,b\mapsto 0$, and $c,d\mapsto 1$, then we obtain $\kappa'$ from Theorem \ref{th:mainy}.

\section*{Acknowledgement} I am grateful to Jean-Paul Allouche for several useful comments.

\bibliographystyle{jis}

\bigskip

\noindent 2010 {\it Mathematics Subject Classification}: Primary 11B85, Secondary 68R15

\medskip

\noindent  \emph{Keywords:} Base 3/2, Thue-Morse sequence, sum of digits, two-block substitution

\end{document}